%
%
%

\documentclass[graybox]{svmult}


\usepackage{mathptmx}       
\usepackage{helvet}         
\usepackage{courier}        
\usepackage{type1cm}        
%
\usepackage{makeidx}         
\usepackage{graphicx}        
\usepackage{multicol}        
\usepackage[bottom]{footmisc}


\makeindex             


\begin{document}

\title*{On stationary solutions of KdV and mKdV equations}
\author{A.V.~Faminskii and A.A.~Nikolaev}
\institute{A.V.~Faminskii \at Peoples' Friendship University of Russia, Miklukho-Maklai str. 6, Moscow, 117198, Russia, \email{afaminskii@sci.pfu.edu.ru}
\and A.A.~Nikolaev \at Peoples' Friendship University of Russia, Miklukho-Maklai str. 6, Moscow, 117198, Russia, \email{anikolaev.rudn@yandex.ru}}
%
%
\maketitle

\abstract*{Stationary solutions on a bounded interval for an initial-boundary value problem to Korteweg--de~Vries and modified Korteweg--de~Vries equation (for the last one both in focusing and defocusing cases) are constructed. The method of the study is based on the theory of conservative systems with one degree of freedom. The obtained solutions turn out to be periodic. Exact relations between the length of the interval and coefficients of the equations which are necessary and sufficient for existence of nontrivial solutions are established.}

\abstract{Stationary solutions on a bounded interval for an initial-boundary value problem to Korteweg--de~Vries and modified Korteweg--de~Vries equation (for the last one both in focusing and defocusing cases) are constructed. The method of the study is based on the theory of conservative systems with one degree of freedom. The obtained solutions turn out to be periodic. Exact relations between the length of the interval and coefficients of the equations which are necessary and sufficient for existence of nontrivial solutions are established.}

\section*{}
Both Korteweg--de~Vries equation (KdV)
$$
u_t+au_x+u_{xxx}+uu_x=0
$$
and modified Korteweg--de~Vries equation (mKdV)
$$
u_t+au_x+u_{xxx}\pm u^2u_x=0
$$
(the sign "+" stands for the focusing case and the sign "-" -- for the defocusing one) describe propagation of long nonlinear waves in dispersive media. We assume $a$ to be an arbitrary real constant. If these equations are considered on a bounded interval $(0,L)$, then for well-posedness of an inital-boundary value problem besides an initial profile one must set certain boundary conditions, for example, 
$$
u\big|_{x=0} = u\big|_{x=L} =u_x\bigl|_{x=L}=0
$$
(see \cite{Kh, F01, BSZ, F07} and others).

It follows from the results of \cite{FL} that such a problem for KdV equation possesses certain internal dissipation:
under some relations between $a$ and $L$ and sufficiently small initial data solutions decay at large time. Similar properties hold for mKdV equation. In order to answer the question if the smallness is essential one has to construct non-decaying solutions. The simplest case of such solutions are stationary solutions: $u=u(x)$. In this situation the considered equations are reduced to the following ordinary differential equations:
\begin{equation}\label{1}
u''' + au' + uu' =0,
\end{equation}
\begin{equation}\label{2}
u''' + au' + u^2u' =0,
\end{equation}
\begin{equation}\label{3}
u''' + au' - u^2u' =0,
\end{equation}
and the boundary conditions --- to the following ones:
\begin{equation}\label{4}
u(0)=u(L)=u'(L)=0.
\end{equation}

The goal of the present paper is to investigate existence of nontrivial solutions to these problems under different relations between $a$ and $L$. The method of the study is based on the qualitative theory of conservative systems with one degree of freedom (see, for example, \cite{A}).

The first example of such a solution by this method for equation (\ref{1}) was constructed in the case $a=0$, $L=2$ in \cite{GS}. In recent paper \cite{DNa} also for equation (\ref{1}) such solutions were constructed for $a=1$ and $L\in (0,2\pi)$ and exact formulas via elliptic Jacobi functions were obtained. In the present paper these special functions are not used.

\begin{lemma}\label{L1}
If $u\in C^3[0,L]$ is a solution to any problems (\ref{1}), (\ref{4}), or (\ref{2}), (\ref{4}), or (\ref{3}), (\ref{4}), then it is infinitely smooth and periodic with period $L$.
\end{lemma}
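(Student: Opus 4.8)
The plan is to handle all three equations uniformly by writing them as
$u'''+au'+F'(u)u'=0$ with $F(u)=u^2/2$, $F(u)=u^3/3$, $F(u)=-u^3/3$ respectively, and then to exploit the first integrals coming from the conservative structure, as in the qualitative theory referenced in the introduction.

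\emph{Smoothness.} From $u'''=-(a+F'(u))u'$ one runs a bootstrap: if $u\in C^k[0,L]$ with $k\ge 3$, then $F'(u)\in C^k$ and $u'\in C^{k-1}$, so the right-hand side lies in $C^{k-1}$ and hence $u\in C^{k+2}$. Iterating gives $u\in C^\infty[0,L]$.

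\emph{Reduction of order and the key use of the boundary conditions.} Integrating the equation once yields $u''+au+F(u)=c_1$ for a constant $c_1$; evaluating at $x=L$ and using $u(L)=0$, $F(0)=0$ gives $c_1=u''(L)$. Multiplying by $u'$ and integrating again produces the energy relation $\tfrac12(u')^2+\tfrac a2 u^2+G(u)=c_1u+c_2$, where $G'=F$ and $G(0)=0$; evaluating at $x=L$ with $u(L)=u'(L)=0$ forces $c_2=0$. Now evaluate this relation at $x=0$: since $u(0)=0$, it collapses to $(u'(0))^2=0$, i.e. $u'(0)=0$. This is the crucial point — the boundary data a priori control $u$ but not $u'$ at the left endpoint, and it is precisely the first integral of the conservative system, pinned down by the data at $x=L$, that forces $u'(0)=0$.

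\emph{Periodicity.} At both endpoints we now have $(u,u')=(0,0)$. Differentiating the second-order equation $u''=c_1-au-F(u)$ repeatedly expresses every derivative $u^{(k)}$, $k\ge 2$, as a fixed polynomial in $u$ and $u'$; hence $u^{(k)}(0)=u^{(k)}(L)$ for all $k\ge 0$. Therefore the $L$-periodic extension $\bar u$ of $u$ to $\mathbb R$ is $C^\infty$; since it solves the ODE on each interval $(kL,(k+1)L)$ and all its derivatives are continuous across the points $kL$, it solves the ODE on all of $\mathbb R$. (Equivalently, one may invoke uniqueness for the Cauchy problem for $u''=c_1-au-F(u)$: $x\mapsto u(x)$ and its $L$-shift satisfy the same equation with the same data $(0,0)$, hence coincide.) Thus $u$ is infinitely smooth and periodic with period $L$.

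\emph{Main obstacle.} The only genuinely delicate step is the extraction of $u'(0)=0$ via the energy integral; once this is available, smoothness of the periodic extension and periodicity are elementary. Some care is needed to verify $F(0)=G(0)=0$ in all three cases, which is what makes the integration constants come out as stated and makes all derivatives at the two endpoints coincide.
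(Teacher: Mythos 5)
Your proposal is correct and follows essentially the same route as the paper: integrate once, use the conserved energy of the resulting second-order conservative equation (pinned to zero by the data at $x=L$) to deduce $u'(0)=0$, and then conclude smoothness and $L$-periodicity from the ODE. You simply spell out the bootstrap for smoothness and the matching of all derivatives at the endpoints, which the paper leaves as "obvious."
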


\begin{proof}
Integrating each of the equations (\ref{1})--(\ref{3}) we obtain that the function $u$ satisfies an equation
\begin{equation}\label{5}
u'' + F'(u) =0, \quad F(0)=0, \quad F\in C^\infty.
\end{equation}
Following \cite{A} introduce a "full energy" $E(x) \equiv \frac12 \bigl(u'(x)\bigr)^2 +F\bigl(u(x)\bigr)$. Then (\ref{5}) yields that $E'(x)\equiv 0$, that is $E(x)\equiv \mathrm{const}$. By virtue of (\ref{4}) $E(L)=0$, therefore $E(0)=0$ and so $u'(0)=0$. The end of the proof is obvious.
\qed
\end{proof}

Further let a fundamental period for a nontrivial periodic function denotes a minimal possible positive value of a period.  

By a symbol $u_{a,T}$ denote a nontrivial solution to any of considered problems with the fundamental period $T$.  

\begin{theorem}\label{T1}
If $aL^2 \ne 4\pi^2$ then there exists a unique solution $u_{a,L}$ to problem (\ref{1}), (\ref{4}). If $aL^2 = 4\pi^2$ such a solution does not exist.
\end{theorem}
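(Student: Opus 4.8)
The plan is to convert the boundary value problem into a statement about the period function of a one-parameter family of conservative systems, and then to deduce the theorem from the elementary monotonicity of that function.

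First I would push the reduction of Lemma~\ref{L1} one step further: integrating (\ref{1}) once gives $u'' + au + \frac12 u^2 = C_1$ with a constant $C_1$ (which by (\ref{4}) equals $u''(0)=u''(L)$), so (\ref{5}) holds with the cubic $F(u)=\frac16 u^3+\frac a2 u^2-C_1 u$, and, as in the proof of Lemma~\ref{L1}, $u(0)=u'(0)=0$ while the full energy vanishes identically, $\frac12(u')^2+F(u)\equiv0$. Conversely, any nonconstant solution of (\ref{5}) on the zero energy level with $u(0)=u(L)=0$ solves (\ref{1}), (\ref{4}), because on $\{E=0\}$ the equality $u=0$ forces $u'=0$, so $u'(L)=0$ is automatic. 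Hence a nontrivial solution is a closed orbit of (\ref{5}) through the origin; since $F(0)=0=E$, the origin is a turning point, so the orbit oscillates between $0$ and the adjacent zero $A\ne0$ of $F$. The third zero of $F$ is then $-3a-A$, and inspecting the sign of $F$ between the turning points shows that the admissible amplitudes are precisely $A>\max(0,-3a)$ (orbit on the positive side of $0$) together with, only when $a>0$, $A\in(-3a/2,0)$ (orbit on the negative side).

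Next I would analyse the fundamental period $T(A)$ of this orbit. Writing $T(A)=2\int du/\sqrt{-2F(u)}$, the integral taken over the orbit, and rescaling the integration variable onto $[0,1]$ (by $u=As$ in the first family, $u=A(1-s)$ in the second) brings the period to the form
\[
 T(A)=2\sqrt{3}\int_0^1\frac{ds}{\sqrt{s(1-s)\,\ell_A(s)}},
\]
where $\ell_A(s)$ is affine in $s$, strictly positive on $[0,1]$ for admissible $A$, and --- for each fixed $s\in(0,1)$ --- a monotone function of $A$; hence $T$ is strictly monotone in $A$ on each admissible interval. It remains to compute the boundary values of $T$. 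Near the low-amplitude ends the orbit either contracts to the origin --- a non-degenerate center when $a>0$, where linearisation of (\ref{5}) gives $T\to2\pi/\sqrt{a}$ --- or, when $a\le0$, collapses onto a homoclinic loop with $T\to+\infty$; near the high-amplitude ends the orbit either grows unbounded, with $T\to0$ (first family), or collapses onto a homoclinic loop, with $T\to+\infty$ (second family, $a>0$). Therefore $T$ maps the first family bijectively onto $(0,2\pi/\sqrt{a})$ when $a>0$ and onto $(0,+\infty)$ when $a\le0$, and maps the second family (when $a>0$) bijectively onto $(2\pi/\sqrt{a},+\infty)$.

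To conclude, the constructed solution has fundamental period exactly $T(A)$, so a solution of fundamental period $L$ exists iff $L$ lies in the range of $T$, in which case the amplitude --- and hence $u_{a,L}$ --- is unique by the monotonicity above; since every nontrivial solution is one of these orbits, $u_{a,L}$ is the only one of fundamental period $L$. For $a\le0$ one has $aL^2\le0<4\pi^2$ and the range of $T$ is all of $(0,+\infty)$, so a solution always exists. For $a>0$ the range is $(0,+\infty)\setminus\{2\pi/\sqrt{a}\}$, so a solution exists iff $L\ne2\pi/\sqrt{a}$, that is iff $aL^2\ne4\pi^2$; and when $aL^2=4\pi^2$ the value $L=2\pi/\sqrt{a}$ is precisely the single omitted value, so no nontrivial solution exists. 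I expect the period-function step to be the main obstacle: verifying admissibility of the amplitude ranges, and especially pinning down the limiting values of $T$ at the degenerate ends, since it is the linearisation at the center that produces the threshold $4\pi^2$.
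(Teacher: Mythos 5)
Your proposal is correct and follows essentially the same route as the paper: reduce to the conservative second-order equation on the zero-energy level through the origin, and show that the resulting time map is strictly monotone with limiting values $0$, $+\infty$, and $2\pi/\sqrt{a}$, so that exactly the period $2\pi/\sqrt{a}$ (i.e.\ $aL^2=4\pi^2$) is omitted. The only difference is cosmetic: you parametrize the orbits by the amplitude $A$ (the adjacent zero of $F$) rather than by the integration constant $c$ as in the paper, which are equivalent parametrizations leading to the same affine factor under the square root and the same limit computations.
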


\begin{theorem}\label{T2}
If $aL^2 < 4\pi^2$ then there exists a unique up to the sign solution $u_{a,L}$ to problem (\ref{2}), (\ref{4}). 
If $aL^2 \geq 4\pi^2$ such solutions do not exist.
\end{theorem}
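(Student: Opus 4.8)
The plan is to push through the conservative-system method used in the proof of Lemma~\ref{L1}. Let $u$ be a nontrivial solution of (\ref{2}), (\ref{4}). Integrating (\ref{2}) once and using $u(0)=0$ gives the autonomous second order equation $u''+au+\frac13 u^3=c$ with $c:=u''(0)$, and by the proof of Lemma~\ref{L1} the energy $E(x)=\frac12(u'(x))^2+\frac a2 u(x)^2+\frac1{12}u(x)^4-c\,u(x)$ vanishes identically, so
$$(u')^2=P_c(u):=2cu-au^2-\frac16 u^4 .$$
First I would eliminate $c=0$: then $P_0(u)=-u^2\bigl(a+\frac16 u^2\bigr)$ has a double zero at the origin, so the orbit of the planar system $u'=p$, $p'=c-au-\frac13 u^3$ through $(0,0)$ is either the equilibrium itself or a homoclinic loop (the latter only for $a<0$), neither of which is a nontrivial solution of finite period. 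Hence $c\ne0$, and since $u\mapsto-u$ carries solutions of (\ref{2}) to solutions with $c$ replaced by $-c$, it suffices to analyse $c>0$; the asserted uniqueness up to the sign is uniqueness of such a $c$.

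Next I would read off the phase portrait for $c>0$. A short case analysis in $a$ shows that $P_c$ has a simple zero at $0$, exactly one positive zero $u_+=u_+(c)$ --- the unique positive root of $2c=au+\frac16 u^3$, which is simple --- and that $P_c>0$ on $(0,u_+)$ while $P_c<0$ on $(u_+,\infty)$ and just to the left of $0$. (For $a<0$ one also notes that $u_+(a+\frac16 u_+^2)=2c>0$ forces $a+\frac16 u_+^2>0$, that is, $u_+>\sqrt{-6a}$; the set $\{(u,u'):(u')^2=P_c(u)\}$ may then have a second bounded component in $u<0$, but it does not contain the origin and plays no role.) Hence the orbit through $(0,0)$ is the closed loop $\{(u,u'):0\le u\le u_+,\ (u')^2=P_c(u)\}$, along which $u$ rises monotonically from $0$ to $u_+$ and returns; it is periodic with fundamental period
$$T(c)=2\int_0^{u_+}\frac{du}{\sqrt{P_c(u)}} .$$
Because the orbit meets $(0,0)$ exactly once per fundamental period, the conditions $u(L)=u'(L)=0$ hold if and only if $L$ is a positive integer multiple of $T(c)$; the solution $u_{a,L}$, having fundamental period $L$, is the case $T(c)=L$.

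The heart of the proof is the behaviour of the period map. Using the factorisation $P_c(u)=u(u_+-u)\bigl(a+\frac16(u^2+uu_++u_+^2)\bigr)$ and the substitution $u=u_+ s$ one obtains
$$T=2\int_0^1\frac{ds}{\sqrt{s(1-s)\bigl(a+\frac16 u_+^2(1+s+s^2)\bigr)}} .$$
The correspondence $c\mapsto u_+$ is a strictly increasing bijection of $(0,\infty)$ onto $(0,\infty)$ for $a\ge0$ and onto $(\sqrt{-6a},\infty)$ for $a<0$; for each $s\in(0,1)$ the bracket $a+\frac16 u_+^2(1+s+s^2)$ is positive and strictly increasing in $u_+$, so the integrand decreases pointwise and $T$ is a strictly decreasing function of $u_+$, hence of $c$. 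As $u_+\to\infty$ the bracket blows up and $T\to0$. At the lower end: for $a>0$ one has $u_+\to0$ and the integrand tends to $\bigl(a\,s(1-s)\bigr)^{-1/2}$, whose integral over $(0,1)$ equals $\pi/\sqrt a$, so $T\to2\pi/\sqrt a$; for $a=0$, $T=\mathrm{const}/u_+\to\infty$; for $a<0$, $u_+\downarrow\sqrt{-6a}$, the bracket at $s=0$ degenerates to zero (the loop tending to the homoclinic orbit through $(0,0)$ present for $c=0$) and the integral diverges, so $T\to\infty$. Thus $T$ is a decreasing bijection of the parameter interval onto $(0,2\pi/\sqrt a)$ when $a>0$ and onto $(0,\infty)$ when $a\le0$.

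The conclusions now follow. Injectivity of $T$ leaves at most one $c>0$ with $T(c)=L$, so $u_{a,L}$ is unique up to the sign. If $a\le0$ then $aL^2<4\pi^2$ automatically and $L$ lies in the range $(0,\infty)$ of $T$, so such a $c$ exists; if $a>0$, it exists precisely when $L<2\pi/\sqrt a$, that is, when $aL^2<4\pi^2$. Finally, $aL^2\ge4\pi^2$ forces $a>0$ and $L\ge2\pi/\sqrt a$, which exceeds every value of $T$, so no admissible $c$ exists and $u_{a,L}$ does not. I expect the period-map analysis to be the main obstacle: establishing strict monotonicity of $T$ and, above all, identifying its limit at the lower end of the parameter range uniformly in the sign of $a$; the degeneration $T\to\infty$ as $a<0$ with $u_+\downarrow\sqrt{-6a}$ (the orbit approaching a homoclinic loop) is the delicate point.
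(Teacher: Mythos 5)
Your proposal is correct and follows essentially the same route as the paper: reduce to the first integral $({u'})^2=P_c(u)$, parametrize by the positive root, rescale to $[0,1]$, and show the period integral is strictly decreasing with limits $2\pi/\sqrt a$ (resp.\ $+\infty$) and $0$ at the ends of the parameter range. The only real difference is that you establish the existence, simplicity and monotone dependence of the root $u_+$ by elementary calculus and parametrize by $u_+$ itself, whereas the paper invokes Cardano's formulas to the same effect; your phase-plane justification of periodicity and of the $c=0$ and $a<0$ cases is sound.
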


\begin{theorem}\label{T3}
If $aL^2 > 4\pi^2$ then there exists a unique up to the sign solution $u_{a,L}$ to problem (\ref{3}), (\ref{4}). 
If $aL^2 \leq 4\pi^2$ such solutions do not exist.
\end{theorem}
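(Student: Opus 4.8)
The plan is to reduce problem (\ref{3}), (\ref{4}) to the study of closed orbits of the conservative system furnished by the proof of Lemma~\ref{L1}, parametrised by the integration constant. Integrating (\ref{3}) once, $u''+au-\frac13u^3=C_1$ for a constant $C_1$, so (\ref{5}) holds with $F(u)=\frac a2u^2-\frac1{12}u^4-C_1u$. From the proof of Lemma~\ref{L1} we already have $u(0)=u'(0)=0$ and $E(x)\equiv0$, hence $(u')^2=G(u):=-2F(u)=\frac16u^4-au^2+2C_1u=uH(u)$ with $H(u):=\frac16u^3-au+2C_1$. If $C_1=0$, then $(0,0)$ is an equilibrium of the $C^\infty$ phase vector field and Cauchy uniqueness forces $u\equiv0$; so a nontrivial solution has $C_1\ne0$. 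Since $u\mapsto-u$ carries solutions of (\ref{3}), (\ref{4}) into solutions with $C_1\mapsto-C_1$, it suffices to produce, for $aL^2>4\pi^2$, a unique solution with $C_1>0$, and for $aL^2\le4\pi^2$ no nontrivial solution at all; that is exactly the assertion that the solution is unique up to the sign.

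Since $G'(0)=2C_1>0$, the phase curve through $(0,0)$ leaves it into $\{u>0\}$, and it is a closed orbit precisely when $H$ has a positive zero. Here I would carry out the elementary sign analysis of $H$ on $[0,\infty)$: $H(0)=2C_1>0$ and $H'(u)=\frac12u^2-a$. If $a\le0$, then $H$ is increasing, has no positive zero, the curve through $(0,0)$ is unbounded, and, being non-periodic, it contradicts Lemma~\ref{L1}; so there is no nontrivial solution, in agreement with $aL^2\le4\pi^2$. If $a>0$, then $H$ attains its minimum $2(C_1-C_1^*)$ at $u=\sqrt{2a}$, where $C_1^*:=\frac{\sqrt2}{3}a^{3/2}$; a positive zero exists iff $0<C_1<C_1^*$, and then $H$ has exactly two, $0<b<\sqrt{2a}<u_2$. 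For such $C_1$ the orbit through $(0,0)$ oscillates in $[0,b]$ with fundamental period $T=2\int_0^b du/\sqrt{G(u)}$, whereas for $C_1\ge C_1^*$ the curve is either unbounded or (at $C_1=C_1^*$) a homoclinic loop of the equilibrium $(\sqrt{2a},0)$, hence not periodic and irrelevant.

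The crux is to show that $T$ increases monotonically from $2\pi/\sqrt a$ to $+\infty$ as $C_1$ runs over $(0,C_1^*)$. Rather than using $C_1$, I would take the turning point $b\in(0,\sqrt{2a})$ as parameter: since $F(b)=0$ on the orbit $E=0$, one gets $C_1=\frac1{12}b(6a-b^2)$, a strictly increasing bijection of $(0,\sqrt{2a})$ onto $(0,C_1^*)$, and $G(u)=\frac u6(b-u)\bigl(6a-b^2-bu-u^2\bigr)$. The substitution $u=bt$ then gives
$$T=T(b)=2\sqrt6\int_0^1\frac{dt}{\sqrt{t(1-t)}\,\sqrt{6a-b^2(1+t+t^2)}},$$
whose integrand is, for every fixed $t\in(0,1)$, positive (because $b^2(1+t+t^2)<6a$) and strictly increasing in $b$; hence $T(b)$ is continuous and strictly increasing. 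As $b\to0^+$ the integrand tends to $(6a\,t(1-t))^{-1/2}$ and $T\to\frac{2\sqrt6}{\sqrt{6a}}\int_0^1\frac{dt}{\sqrt{t(1-t)}}=\frac{2\pi}{\sqrt a}$; as $b\to\sqrt{2a}^-$ the radicand tends to $2a(1-t)(2+t)$, the integral diverges, and $T\to+\infty$. Thus $T$ maps $(0,\sqrt{2a})$ bijectively onto $(2\pi/\sqrt a,+\infty)$.

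Finally I would assemble the conclusion. A nontrivial solution with fundamental period $L$ exists iff $L$ is the fundamental period of one of these orbits, i.e. iff $a>0$ and $L>2\pi/\sqrt a$, equivalently $aL^2>4\pi^2$; and then $b$, hence $C_1>0$, hence the orbit, and --- since $u(0)=u'(0)=0$ pins the phase point to $(0,0)$, attained once per period --- the solution itself, is uniquely determined, so it is unique up to the sign. If $aL^2\le4\pi^2$: for $a\le0$ there is no closed orbit through the origin, and for $a>0$ one has $L\le2\pi/\sqrt a$, so the fundamental period of any nontrivial solution would divide $L$ yet, being an attained period, exceed $2\pi/\sqrt a\ge L$ --- impossible; hence no nontrivial solution exists. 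The only real obstacle is the monotonicity and the two limits of the period function, and the substitution $u=bt$ is precisely what makes the integrand visibly monotone in $b$; the rest is the phase-plane bookkeeping already prepared by Lemma~\ref{L1}.
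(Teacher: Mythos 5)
Your proposal is correct and follows essentially the same route as the paper: reduce to the conservative system $u''+F'(u)=0$ with $E\equiv 0$, express the fundamental period as $2\sqrt6\int_0^1 dt\big/\sqrt{t(1-t)\,(6a-b^2(1+t+t^2))}$ after the substitution $u=bt$, and show this period function is strictly monotone with range $(2\pi/\sqrt a,+\infty)$. The only (harmless) deviation is that you parametrise the orbits by the turning point $b$ rather than by the integration constant, which lets you bypass the Cardano formulas the paper uses to locate the roots of the cubic; the monotonicity and the two limiting values come out the same.
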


\begin{remark}\label{R1}
If $aL^2 \ne 4\pi n^2$ for certain natural $n\geq 2$ then obviously  the function $u(x) \equiv n^2 u_{a/n^2,L}(nx)$ is a solution to problem (\ref{1}), (\ref{4}) with the fundamental period $T=L/n$. If $aL^2 < 4\pi n^2$ for certain natural $n$ then the function $u(x) \equiv n u_{a/n^2,L}(nx)$ is a solution to problem (\ref{2}), (\ref{4}) with the fundamental period $T=L/n$.
In particular, nontrivial solutions to problems (\ref{1}), (\ref{4}) and (\ref{2}), (\ref{4}) exist for any $a$ and positive $L$.
If $aL^2 \leq 4\pi^2$ then nontrivial solutions to problem (\ref{3}), (\ref{4}) do not exist.
\end{remark}

Further for convenience we pass from the segment $[0,L]$ to the segment $[-1,1]$. For $x\in [-1,1]$ in the case of equation (\ref{1}) make a substitution $y(x) \equiv \frac {L^2}4 u\bigl(\frac L2 (x+1)\bigr)$, while in the case of equations (\ref{2}) and (\ref{3}) --- substitution $y(x) \equiv \frac L2 u\bigl(\frac L2 (x+1)\bigr)$. Then for $b=\frac{L^2}4 a$ these equations transform respectively to the following ones:
\begin{equation}\label{6}
y''' + by' + yy' =0,
\end{equation}
\begin{equation}\label{7}
y''' + by' + y^2y' =0,
\end{equation}
\begin{equation}\label{8}
y''' + by' - y^2y' =0,
\end{equation}
and consider periodic solutions to these equations with the fundamental period $T=2$ such that
\begin{equation}\label{9}
y(-1) = y'(-1) =0.
\end{equation}

We apply the following lemma in the spirit of the qualitative theory of conservative systems with one degree of freedom.

\begin{lemma}\label{L2}
Consider an initial value problem
\begin{equation}\label{10}
y'' + F'(y) =0, \qquad y(-1) = y'(-1) =0,
\end{equation}
where $F\in C^\infty$, $F(0)=0$. Then a nontrivial periodic solution to problem (\ref{10}) with the fundamental period $T=2$ exist if and only if $F'(0) \ne 0$ and there exists $y_0 \ne 0$ such that $F(y_0)=0$, $F'(y_0) \ne 0$, $F(y)<0$ for $y\in (0,y_0)$ if $y_0>0$, $F(y)<0$ for $y\in (y_0,0)$ if $y_0<0$ and
\begin{equation}\label{11}
\int_0^{y_0} \frac{dy}{\sqrt{-2F(y)}} =1 \quad \mathrm{if}\  y_0>0,\qquad
\int_{y_0}^0 \frac{dy}{\sqrt{-2F(y)}} =1 \quad \mathrm{if}\ y_0<0.
\end{equation}
\end{lemma}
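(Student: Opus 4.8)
The plan is to run the classical phase-plane argument for the conservative equation (\ref{10}), organised around the conserved ``full energy'' $E(x)\equiv\frac12\bigl(y'(x)\bigr)^2+F\bigl(y(x)\bigr)$. First I would note that $E$ is constant along any solution, and the initial conditions force $E\equiv E(-1)=0$, so every solution satisfies $\bigl(y'\bigr)^2=-2F(y)$; in particular $F(y(x))\le 0$ everywhere, and wherever $y=0$ one also has $y'=0$. The whole argument then rests on uniqueness for the initial value problem $y''=-F'(y)$ (legitimate since $F'\in C^\infty$ is locally Lipschitz): two solutions that vanish together with their first derivatives at respective points differ only by a shift of the argument.

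\emph{Necessity.} I would take a nontrivial periodic solution $y$ of fundamental period $2$. If $F'(0)=0$ then $y\equiv 0$ by uniqueness, a contradiction, so $F'(0)\ne 0$; I may assume $F'(0)<0$, the case $F'(0)>0$ being symmetric under $y\mapsto -y$ (which replaces $F(y)$ by $F(-y)$) and producing $y_0<0$. From the observation above, $y(x_*)=0$ forces $y'(x_*)=0$, hence $y(\,\cdot+x_*+1)\equiv y(\,\cdot\,)$, so $x_*+1$ is a period and therefore an integer multiple of $2$; thus the zero set of $y$ is exactly the odd integers, $y$ keeps one sign on $(-1,1)$, and since $y''(-1)=-F'(0)>0$ this sign is $+$. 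Let $y_0:=\max_{[-1,1]}y\ge y(0)>0$; it is attained at an interior $x_1$, so $y'(x_1)=0$ and hence $F(y_0)=0$. Reflecting $y$ about a critical point $x_*$ gives $y(\,\cdot\,)=y(2x_*-\,\cdot\,)$ by uniqueness, whence $y(2x_*+1)=y(-1)=0$ and $x_*$ must be an integer; so $x_1=0$ is the only critical point in $(-1,1)$, $y$ is strictly increasing on $[-1,0]$ with $y'=\sqrt{-2F(y)}>0$ there, which forces $F<0$ on $(0,y_0)$, and $F'(y_0)\ne 0$ (otherwise $(y_0,0)$ is an equilibrium of the phase system and uniqueness yields $y\equiv y_0$). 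Finally the substitution $x\mapsto y(x)$ on $[-1,0]$ turns $1=\int_{-1}^0 dx$ into $\int_0^{y_0}\frac{dy}{\sqrt{-2F(y)}}$, which is (\ref{11}).

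\emph{Sufficiency.} Given the stated data with, say, $y_0>0$ (the case $y_0<0$ is analogous), I would first note that $F<0$ on $(0,y_0)$ together with $F(0)=F(y_0)=0$ and $F'(0),F'(y_0)\ne 0$ forces $F'(0)<0<F'(y_0)$, so $\frac1{\sqrt{-2F(y)}}$ is comparable to a constant times $y^{-1/2}$ near $0$ and to a constant times $(y_0-y)^{-1/2}$ near $y_0$. Hence $G(c):=\int_0^c\frac{dy}{\sqrt{-2F(y)}}$ is a well-defined, continuous, strictly increasing bijection $[0,y_0]\to[0,1]$ (using $G(y_0)=1$ from (\ref{11})); I set $\phi:=G^{-1}$, which is $C^\infty$ on $(0,1)$ by bootstrapping $\phi'=\sqrt{-2F(\phi)}$, and define $y(x):=\phi(1-|x|)$ on $[-1,1]$, extended $2$-periodically. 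Off the integers, $y$ is the composition of $\phi$ with an affine map of slope $\pm 1$, hence smooth with $\bigl(y'\bigr)^2=-2F(y)$, and differentiating once more — valid since $y'\ne 0$ there — gives $y''=-F'(y)$. Near an integer the turning value is $0$ or $y_0$, where $F'\ne 0$, so the phase vector field $(p,-F'(y))$ does not vanish; the unique integral curve of $y''=-F'(y)$ through $(0,0)$ (respectively $(y_0,0)$) is $C^\infty$, lies on the zero energy level, is even about the turning instant by uniqueness, and by separation of variables its first coordinate coincides near that instant with $\phi(1-|\,\cdot\,|)$. Thus $y$ is globally $C^\infty$, solves (\ref{10}) everywhere, is nontrivial since $y(0)=y_0$, and has fundamental period exactly $2$ because $\phi(1-|x|)$ vanishes on $[-1,1)$ only at $x=-1$.

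The step I expect to be the main obstacle is the $C^\infty$ regularity of the constructed solution at the turning points, where $\frac1{\sqrt{-2F(y)}}$ is singular: a direct computation with $\phi$ is awkward, and the clean route is to identify $y$ locally with the automatically smooth integral curve of the non-vanishing phase vector field through the non-degenerate turning point — this is exactly where the hypotheses $F'(0)\ne 0$ and $F'(y_0)\ne 0$ are needed. In the necessity part the parallel subtlety is excluding extra oscillations, i.e.\ proving that the periodic orbit is the simple single-loop one; both difficulties are resolved through uniqueness for the second-order initial value problem.
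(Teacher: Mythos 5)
Your proof is correct and follows essentially the same route as the paper's: the zero-energy identity $\frac12\bigl(y'\bigr)^2+F(y)\equiv 0$, uniqueness for the second-order initial value problem to force the single-loop orbit and the symmetry of the solution, and separation of variables to arrive at (\ref{11}). You merely write out in full the two steps the paper dismisses as easy — excluding extra critical points in the necessity direction and the smooth gluing at the turning points in the sufficiency direction — so no further comment is needed.
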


\begin{proof}
First of all note that similarly to (\ref{5}) $E(x)\equiv \frac12 \bigl(y'(x)\bigr)^2 +F\bigl(y(x)\bigr) \equiv 0$ if $y(x)$ is a solution to problem (\ref{10}). Due to uniqueness of solutions to the initial value problem the condition $F'(0) \ne 0$ is necessary for existence of nontrivial solutions.

Consider, for example, the case $F'(0)<0$. If the function $F$ is negative $\forall y>0$ then it is easy to see that there is no periodic solution to problem (\ref{10}). Therefore, existence of positive $y_0$ such that $F(y_0)=0$, $F(y)<0$ for $y\in (0,y_0)$ is necessary.

Uniqueness of the solution implies that the function $y(x)$ is even (if exists). Then it is easy to see that it possesses the following properties: $y'(x)>0$ for $x\in (-1,0)$, $y'(x)<0$ for $x\in (0,1)$, $y(0)=y_0$, $y'(0)=0$. Again due to uniqueness $F'(y_0)\ne 0$.

Therefore, for $x\in [0,1]$ the function $y(x)$ satisfies the following conditions:
$$
\frac{dy}{dx} = -\sqrt{-2F(y)},\quad y(0)=y_0, \quad y(1)=0.
$$
Integrating we obtain that $\displaystyle\int_0^{y_0} \frac{dy}{\sqrt{-2F(y)}} =1$.

It is easy to see that under these assumptions the desired solution exist. The case $F'(0)>0$ is considered in a similar way (then $y_0<0$).
\qed
\end{proof}

Now we can prove our theorems.

\begin{proof}[Theorem~\ref{T1}]
Equation (\ref{6}) is equivalent to equation
\begin{equation}\label{12}
y'' + by + \frac 12 y^2 =c
\end{equation}
for certain real constant $c$. Therefore, construction of a solution transforms to search of a constant $c$ such for a function
$$
F(y) \equiv \frac16 y^3 + \frac{b}2 y^2 -cy = \frac 16 y(y^2+3by-6c) \equiv \frac16 y F_0(y)
$$
the hypothesis of Lemma~\ref{L2} is satisfied. Note that $F'(y) = \frac12 y^2 +by-c$. Therefore, the condition $F'(0)\ne 0$ implies that $c\ne 0$.

Real simple nonzero roots of the function $F_0$ exist if and only if $D=9b^2+24c>0$ and then these roots are expressed by formulas $y_0 = \frac12 (-3b +\sqrt{D})$, $y_1 = -\frac12 (3b+\sqrt{D})$. 

It is easy to see that if $c>0$ then for any $b$ the root $y_0>0$, $F(y)<0$ for $y\in (0,y_0)$, $F'(y_0)\ne 0$. If $c\in (-3b^2/8,0)$ then for $b>0$ the root $y_0<0$,  $F(y)<0$ for $y\in (y_0,0)$, $F'(y_0)\ne 0$.

Therefore, we have to find the constant $c$ for which condition (\ref{11}) is satisfied. Note that
$$
-2F(y) = \frac 13 y (y_0-y)(y-y_1).
$$
After the change of variable $y=y_0t$ each of equations (\ref{11}) reduces to an equation
$$
I(b,c) \equiv \sqrt{3} \int_0^1 \frac{dt}{\sqrt{t(1-t)(y_0t-y_1)}} =1.
$$
Since $y_0t-y_1 = \frac12(\sqrt{D}-3b)t+\frac12(\sqrt{D}+3b)$ it is easy to see that for the fixed $b$ the function $I(b,c)$ monotonically decreases. Moreover, $\displaystyle\lim\limits_{c\to+\infty} I(b,c)=0$ and for $b>0$
$$
\lim\limits_{c\to -\frac38 b^2+0} I(b,c) =\sqrt{\frac2b}\int_0^1 \frac{dt}{\sqrt{t}(1-t)}=+\infty,\
\lim\limits_{c\to 0} I(b,c) =\frac1{\sqrt{b}}\int_0^1 \frac{dt}{\sqrt{t(1-t)}}=\frac\pi{\sqrt{b}},
$$
for $b= 0$
$$
\lim\limits_{c\to 0+0} I(b,c) = \lim\limits_{c\to 0+0}\frac1{\sqrt{2c}}\int_0^1 \frac{dt}{\sqrt{t(1-t)(t+1)}} =+\infty,
$$
for $b< 0$
$$
\lim\limits_{c\to 0+0} I(b,c) = \frac1{\sqrt{|b|}}\int_0^1 \frac{dt}{t\sqrt{1-t}} =+\infty.
$$
Therefore, the desired value of $c$ exists and is unique if $b\ne \pi^2$, while for $b=\pi^2$ such a value does not exist.
\qed
\end{proof}

\begin{remark}\label{R2}
The substitution $u(x)=a_0+v(x-x_0)$ under the appropriate choice of the parameters $a_0$ and $x_0$ transforms any periodic solution of equation (\ref{1}) with the period $L$ to solution of an equation $v'''+(a+a_0)v'+vv'=0$ satisfying conditions $v(0)=v'(0)=v(L)=v'(L)=0$. Therefore, any solution of equation (\ref{1}) with the fundamental period $L$ can be expressed in this way by the functions $u_{a+a_0,L}$. Solutions similar to functions $u_{a,L}$ were considered also in \cite{Ne}. In \cite{AnBS} representation of periodic solutions of equation (1) is given via elliptic Jacobi functions. The advantage of our approach is that it can give transparent description of solutions.

Consider, for example, the case $b>0$. Then for $b\in (0,\pi^2)$ the constructed solution of problem (\ref{6}), (\ref{9}) is an even "hill" of the height $y_0 =\frac 12(-3b+\sqrt{9b^2+24c})>0$, while for $b>\pi^2$ --- an even "hole" of the depth $y_0<0$. Note that $I_c(b,c)<0$, $I_b(b,c)<0$. Therefore, the equation $I(b,c)=1$ determines a smooth decreasing function $c(b)$. Since $I(\pi^2,0)=1$ we have that $c(\pi^2)=0$. Return to equation (\ref{1}). Let $a>0$. If
$u_0=\frac12(-3a+\sqrt{9a^2 +384c L^{-2}})$, where $c=c(L^2a/4)$, then for $L<2\pi/\sqrt{a}$ the solution $u_{a,L}$ to problem (\ref{1}), (\ref{4}) is a "hill" of the height $u_0>0$ and for $L>2\pi/\sqrt{a}$ --- a "hole" of the depth $u_0<0$ (the center in both cases is at the point $L/2$). In addition, $u_0\to+\infty$ as $L\to 0$, $u_0\to 0$ as $L\to 2\pi/\sqrt{a}$, $u_0\to 0$ as $L\to +\infty$.
\end{remark}

\begin{proof}[Theorem~\ref{T2}]
Equation (\ref{7}) is equivalent to equation
\begin{equation}\label{13}
y'' + by + \frac 13 y^3 =c
\end{equation}
for certain real constant $c$. Let
$$
F(y) \equiv \frac1{12} y^4 + \frac{b}2 y^2 -cy = \frac 1{12} y(y^3+6by-12c) \equiv \frac1{12} y F_0(y).
$$
Note that the substitution $z(x)\equiv -y(x)$ leads to an equation similar to (\ref{13}), where $c$ is replaced by $(-c)$. Therefore, further it is sufficient to assume that $c>0$ (if $c=0$ then $F'(0)=0$). 

Similarly to the proof of Theorem~\ref{T1} we need to find the roots of the function $F_0$. We apply Cardano formulas. Let
$D=8b^3+36c^2$, 
$$
p= \sqrt[3]{6c +\sqrt{D}},\quad q= \sqrt[3]{6c -\sqrt{D}} \quad \mathrm{if}\ D\geq 0,
$$
$$
p= \sqrt[3]{6c +i\sqrt{|D|}}= \sqrt{2|b|}e^{\frac{i}3\arccos(3c/\sqrt{2|b|^3})},\quad q=\overline{p} \quad 
\mathrm{if}\ D<0.
$$
The the function $F_0$ has a real root $y_0=p+q>0$. Moreover, if $D>0$ there are two complex conjugate roots with negative real parts, and if $D\leq 0$ (it is possible only for $b<0$) --- two negative real roots $y_1$ and $y_2$ ($y_1=y_2$ if $D=0$).

According to Vi\`ete formulas $y_1+y_2=-y_0$, $y_1y_2=6b-y_0y_1-y_0y_2= 6b+y_0^2$ and then
$$
-2F(y) = \frac 16 y (y_0-y)(y^2+y_0y+y_0^2+6b).
$$
After the change of variable $y=y_0t$ first equation (\ref{11}) reduces to an equation
$$
I(b,c) \equiv \sqrt{6} \int_0^1 \frac{dt}{\sqrt{t(1-t)(y_0^2t^2+y_0^2t+y_0^2+6b)}} =1.
$$
It is easy to see that for the fixed $b$ the function $y_0(c)$ monotonically increases and $y_0(c)\to +\infty$ as 
$c\to +\infty$ (note that $y_0=\sqrt{8|b|}\cos\bigl(\frac13\arccos(3c/\sqrt{2|b|^3})\bigr)$ if $D<0$).
Then for the fixed $b$ the function $I(b,c)$ monotonically decreases and $\displaystyle\lim\limits_{c\to+\infty} I(b,c)=0$.
Moreover, if $c\to 0+0$ then $y_0(c)\to 0$ for $b\geq 0$ and $y_0(c)\to\sqrt{6|b|}$ for $b<0$. Therefore,
$$
\lim\limits_{c\to 0+0} I(b,c) =\frac1{\sqrt{b}}\int_0^1 \frac{dt}{\sqrt{t(1-t)}}=\frac\pi{\sqrt{b}}\quad \mathrm{if}\ b>0,
$$
$$
\lim\limits_{c\to 0+0} I(b,c) =\lim\limits_{c\to 0+0} \frac{\sqrt{6}}{\sqrt[3]{12c}} \int_0^1\frac{dt}{\sqrt{t(1-t^3)}} 
 =+\infty\quad \mathrm{if}\ b=0,
$$
$$
\lim\limits_{c\to 0+0} I(b,c) = \frac1{\sqrt{|b|}}\int_0^1 \frac{dt}{t\sqrt{1-t^2}} =+\infty\quad \mathrm{if}\ b<0.
$$
Hence, the desired positive value of $c$ exists and is unique if $b< \pi^2$, while for $b\geq \pi^2$ such a value does not exist.
\qed
\end{proof}

\begin{proof}[Theorem~\ref{T3}]
Equation (\ref{8}) is equivalent to equation
\begin{equation}\label{14}
y'' + by - \frac 13 y^3 =c
\end{equation}
for certain real constant $c$. Let
$$
F(y) \equiv -\frac1{12} y^4 + \frac{b}2 y^2 -cy = -\frac 1{12} y(y^3-6by+12c) \equiv -\frac1{12} y F_0(y).
$$
As in the proof of Theorem~\ref{T2} consider only the case $c>0$.

Again apply Cardano formulas. Let $D=-8b^3+36c^2$, 
$$
p= \sqrt[3]{-6c +\sqrt{D}},\quad q= \sqrt[3]{-6c -\sqrt{D}} \quad \mathrm{if}\ D\geq 0,
$$
$$
p= \sqrt[3]{-6c +i\sqrt{|D|}}= \sqrt{2b}e^{\frac{i}3\bigl(\pi+\arccos(3c/\sqrt{2b^3})\bigr)},\quad q=\overline{p} \quad \mathrm{if}\ D<0.
$$
If $D>0$ then the function $F_0$ has a real root $y_0=p+q<0$ and two complex conjugate roots $y_1$ and $y_2$. If $D=0$ then again the function $F_0$ has a real root $y_0=p+q<0$ and a double real root $y_1=y_2>0$. Both these two cases do not satisfy the hypothesis of Lemma~\ref{L2} since $F'(0)<0$.

It remains to consider the case $D<0$ (it is possible only if $b>0$), then $c\in (0,\frac{\sqrt{2}}3 b^{3/2})$. Here the function $F_0$ has three distinct real roots, where a root 
$y_0=p+q=\sqrt{8b}\cos\bigl(\frac{\pi}3+\frac13\arccos(3c/\sqrt{2b^3})\bigr)>0$, a root $y_1<0$, a root $y_2>y_0$. We have that $y_1+y_2=-y_0$, $y_2y_2=-6b+y_0^2$ and then
$$
-2F(y) = \frac 16 y (y_0-y)(6b-y_0^2-y_0y-y^2).
$$
After the change of variable $y=y_0t$ first equation (\ref{11}) reduces to an equation
$$
I(b,c) \equiv \sqrt{6} \int_0^1 \frac{dt}{\sqrt{t(1-t)(6b-y_0^2(1+t+t^2))}} =1.
$$
Similarly to the previous theorem for the fixed $b$ the function $y_0(c)$ monotonically increases, therefore, unlike to the previous theorem the function $I(b,c)$ also monotonically increases. It is easy to see that
$$
\lim\limits_{c\to 0+0} I(b,c) =\frac1{\sqrt{b}}\int_0^1 \frac{dt}{\sqrt{t(1-t)}}=\frac\pi{\sqrt{b}},
$$
$$
\lim\limits_{c\to \frac{\sqrt{2}}3 b^{3/2}-0} I(b,c) = \sqrt{\frac3b}\int_0^1 \frac{dt}{(1-t)\sqrt{t(t+2)}} =+\infty.
$$
Hence, the desired positive value of $c$ exists and is unique if $b> \pi^2$, while for $b\leq \pi^2$ such a value does not exist.
\qed
\end{proof}

\begin{remark}\label{R3}
In \cite{AnNa, An, Na} periodic solutions of equations (\ref{2}) and (\ref{3}) were considered in the case when the constant $c=0$ in equations (\ref{13}) and (\ref{14}). Therefore, the periodic solutions constructed in the present paper do not coincide with solutions from that papers. 
\end{remark}

\begin{acknowledgement}
The first author was supported by Project 333, State Assignment in the field of scientific activity implementation of Russia.
\end{acknowledgement}

%
%
%

\end{document}